\documentclass[reqno]{amsart}
\usepackage[foot]{amsaddr}

\usepackage{graphicx} %
\usepackage{amsmath,amssymb,amsthm}
\usepackage{mathtools}
\usepackage{algorithm}
\usepackage{algorithmic}
\usepackage{multicol}
\usepackage{fullpage}
\usepackage{hyperref}
\usepackage{xcolor}
\usepackage{mleftright}
\usepackage{enumitem}
\usepackage{caption}
\usepackage{float}
\usepackage{hhline}
\usepackage{tikz}
\usepackage{tikz-cd}
\usepackage{quiver} 
\usepackage[nameinlink]{cleveref}
\usepackage{refcount}

\newcommand{\qand}{\quad\text{and}\quad}

\newcommand{\R}{{\mathbb{R}}}

\newcommand{\N}{{\mathbb{N}}}

\newcommand{\wrt}{\,\textnormal d}
\newcommand{\deriv}[2]{\frac{\wrt^{#2}}{\wrt{#1}^{#2}}}

\newcommand{\wt}{\widetilde}

\newcommand{\abs}[1]{\mleft|#1\mright|}

\newcommand{\pare}[1]{\mleft(#1\mright)}

\newcommand{\set}[1]{{\left\{{#1}\right\}}}

\newcommand{\spliteq}[2]{\begin{equation}#1\begin{split}#2\end{split}\end{equation}}
\newcommand{\eq}[1]{\begin{equation}{#1}\end{equation}}

\DeclareMathOperator*{\E}{\mathbb{E}}

\DeclareMathOperator*{\argmin}{arg\,min}

\DeclareMathOperator{\sign}{sign}

\newtheorem{theorem}{Theorem}[section]
\newtheorem{lemma}[theorem]{Lemma}
\newtheorem{proposition}[theorem]{Proposition}

\AddToHook{env/lemma/begin}{\crefalias{theorem}{lemma}}
\crefname{theorem}{Theorem}{Theorems}
\crefname{lemma}{lemma}{lemmas}
\crefname{equation}{}{}

\makeatletter
\newtheorem*{rep@theorem}{\rep@title}
\newcommand{\newreptheorem}[2]{%
\newenvironment{rep#1}[1]{%
 \def\rep@title{#2 \ref{##1}}%
 \begin{rep@theorem}}%
 {\end{rep@theorem}}}
\makeatother

\newreptheorem{theorem}{Theorem}
\newreptheorem{coro}{Corollary}

\newtheorem{remark}{Remark}
\theoremstyle{definition}

\DeclareMathOperator{\law}{law}
\DeclareMathOperator{\medish}{middle}
\DeclareMathOperator{\mean}{mean}

\DeclareMathOperator{\Ber}{Ber}

\title[maximum-entropy median-martingale]{The maximum-entropy median-martingale}
\author{Rikhav Shah}
\address{Massachusetts Institute of Technology}
\email{rdshah@mit.edu}
\author{Vilas Winstein}
\address{University of California, Berkeley}
\email{vilas@berkeley.edu}

\date{May 4, 2026}

\begin{document}

\begin{abstract}
    This short note explores the maximum-entropy walk on the unit interval that is a \textit{median-martingale}. That is, the median of its next state is equal to its current state. The stationary distribution of this walk is the arcsine distribution, and we provide a proof that elucidates the connection to two classical arcsine laws for Brownian motion.
    The notion of a martingale is further generalized, and a larger class of walks is considered and similarly characterized.
\end{abstract}

\maketitle

\section{Introduction}
A martingale is a random process $\set{X_t}_{t\in\N}$ which satisfies the relationship that $\E(X_t|X_{t-1},\ldots,X_1)=X_{t-1}$. In this note, we consider a related definition in which the mean is replaced by the median:
\eq{\label{a0}\text{median}(X_t\,|\,X_{t-1},\ldots,X_1)=X_{t-1}.}
Unlike the mean, the median is not always unique; that is, for a random variable $X$ there may be multiple values $m$ such that $\Pr(X\le m)=\Pr(X\ge m)$ (e.g. $X$ is a Bernoulli random variable). When we write $\text{median}(X)=m$, we mean $m$ is a value satisfying $\Pr(X\le m)=\Pr(X\ge m)$.
When $\set{X_t}$ satisfies \cref{a0}, we call it a ``median-martingale''.
Though this definition is natural and has appeared before \cite{b0,b1,b2}, it does not seem to have garnered significant attention.

This note considers the median martingale which takes values in the unit interval $(0,1)$ and maximizes the differential entropy of each next state. More explicitly, given a current state $X_{t}=x$, the differential entropy of the next state $X_{t+1}$ is maximized subject to \cref{a0} for the mixture
\eq{\label{a1}\frac12\text{Unif}(0,x)+\frac12\text{Unif}(x,1).}
We therefore take $\set{X_t}_{t\in\N}$ to be the Markov chain where $(X_t\,|\,X_{t-1}=x)$ is distributed according to \cref{a1}.
Notice a key way this departs from regular martingales: Doob's martingale convergence theorem tells us that if a martingale is bounded, then it must converge. On the other hand, the process we just described is bounded but clearly does not converge: for any given current state, the density of the next state is lower bounded by 1/2 on the entire interval.

This process has been studied before in the context of iterated function systems and attractor sets.
In particular, Diaconis and Freedman \cite{b3} show for any starting point $X_0$ that $\law(X_t)$ converges to a stationary distribution in the Prokhorov metric as $t\to\infty$.
They show how to derive a formula for this stationary distribution which a reader can straightforwardly verify. This stationary distribution is the \textit{arcsine distribution}, which is famous for the so-called \textit{arcsine laws}, which catalog its appearance in statistics of Brownian motion \cite{b4}.
Stoyanov and Pirinsky \cite{b5} show the stronger statement that for any starting point $X_0$, $\law(X_t)$ converges to the arcsine distrubtion in total variation distance. Both \cite{b3,b5} and others (cf. \cite{b6}) study several interesting generalizations of this walk. We focus on the appearance of the arcsine distribution and this martingale-like property, which leads us in a different direction.
For completeness, we include the calculation referred to in \cite{b3} which verifies that the arcsine distribution is stationary.

Our main contribution is an investigation into the appearance of the arcsine law in this setting. In particular, we provide a new proof that the stationary distribution of $\set{X_t}_{t\in\N}$ is the arcsine law, and our proof makes explicit a relationship of $\set{X_t}_{t\in\N}$ to Brownian motion.
We introduce a process on functions on $[0,1]$, each state of which is an instantiation of Brownian motion. The positive occupation time of each state, which is arcsine distributed by the arcsine laws, is shown to have the same stationary distribution as $\set{X_t}_{t\in\N}$.

We further investigate a generalization of this walk, which is distinct from the generalizations considered in previous works. Our generalization is related to two observations: first, one may replace the median in \cref{a0}  with yet other representatives of ``the middle'' of a distribution. Second, the denominator of the arcsine density, $\sqrt{x(1-x)}$, is the \textit{geometric mean} of $x$ and $1-x$, and one obtains an interesting family of distributions by replacing this this with other means, e.g. arithmetic, quadratic, etc).
Our contribution is primarily the suggestion of this class of walks as an interesting object of study. Our analysis of it in one case is an adaptation of the ``differentiation under the integral sign'' technique used by \cite{b3} for the median-martingale.

\section{Main analysis}
The walk $\set{X_t}_{t\in\N}$ can be described as
\eq{\label{a2}
\pare{X_t\,|\,X_{t-1}=x}\text{ is uniform on }(0,x)\text{ or uniform on }(x,1)\text{ with equal probability.}
}
The transition kernel on $(0,1)^2$ can be written as
\[K_X(s,x)=\frac1{2x}\mathbf1_{0<s < x}+\frac1{2(1-x)}\mathbf1_{x<s<1}.\]
The first argument $s$ always represents the next state, and $x$ the current state so that $K_Xf$ is the density of $X_{t+1}$ when $f$ is the density of $X_t$.
We begin with a brief proof of the main theorem which simply performs an explicit integral calculation.
\begin{theorem}\label{a3}
The stationary distribution of $\set{X_t}_{t\in\N}$ defined by \cref{a2} is the arcsine distribution.
\end{theorem}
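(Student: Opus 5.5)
We verify directly that the arcsine density $f(x)=\frac{1}{\pi\sqrt{x(1-x)}}$ on $(0,1)$ is a fixed point of the kernel, i.e. that $K_Xf=f$. We then argue that this stationary distribution is unique, which is what justifies calling it \emph{the} stationary distribution. Writing out the action of the kernel, for $s\in(0,1)$ we have
\[(K_Xf)(s)=\int_0^1 K_X(s,x)f(x)\wrt x=\frac12\int_s^1\frac{f(x)}{x}\wrt x+\frac12\int_0^s\frac{f(x)}{1-x}\wrt x,\]
because the first indicator forces $x>s$ and the second forces $x<s$. Since $f(x)/x\sim x^{-3/2}$ only near $0$ and $s>0$, and symmetrically for the second term, both integrals converge.

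Rather than evaluating the integrals with closed-form antiderivatives, we follow the ``differentiation under the integral sign'' idea credited to \cite{b3}. Set $g=K_Xf$ and differentiate in $s$:
\[g'(s)=-\frac{f(s)}{2s}+\frac{f(s)}{2(1-s)}=\frac{f(s)(2s-1)}{2s(1-s)}.\]
On the other hand, $f(s)=\frac1\pi\,(s(1-s))^{-1/2}$ gives $f'(s)=-\frac12\cdot\frac1\pi(s(1-s))^{-3/2}(1-2s)=\frac{f(s)(2s-1)}{2s(1-s)}$. Hence $g'=f'$ on $(0,1)$, so $g-f$ is constant there.

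To show the constant is zero, we evaluate at $s=1/2$, where by symmetry $g(1/2)=\int_{1/2}^1 \frac{f(x)}{x}\wrt x$. With the substitution $x=\sin^2\theta$ we have $f(x)\wrt x=\frac{2}{\pi}\wrt\theta$, so the integral equals $\frac2\pi\int_{\pi/4}^{\pi/2}\csc^2\theta\wrt\theta=\frac2\pi\cot(\pi/4)=\frac2\pi=f(1/2)$. Alternatively, both $g$ and $f$ integrate to $1$ (since $K_X$ preserves mass), and because neither integral is infinite, a constant difference must vanish. This establishes stationarity.

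For uniqueness, we use the observation from the introduction that $K_X(s,x)\ge\frac12$ for all $s,x$. This is a Doeblin minorization: $K_X(\cdot,x)\ge\frac12\,\mathrm{Unif}(0,1)$. It makes the chain a contraction in total variation with factor $\frac12$, so any two stationary laws coincide, and $\law(X_t)$ converges to the arcsine law from any start, consistent with \cite{b5}. We expect the main obstacle to be the bookkeeping that justifies differentiating the parametric integrals near the singular endpoints. This is handled by working on compact subintervals $[\epsilon,1-\epsilon]$, where the integrands $f(x)/x$ and $f(x)/(1-x)$ are continuous, so the fundamental theorem of calculus applies directly.
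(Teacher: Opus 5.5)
Your proof is correct. The formulas for $g'$ and $f'$ agree, the check $g(1/2)=\frac2\pi\int_{\pi/4}^{\pi/2}\csc^2\theta\wrt\theta=\frac2\pi=f(1/2)$ is right, and so is the mass-conservation alternative. Your uniqueness step via $K_X\ge\frac12$ is the same Doeblin argument the paper uses.

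The difference is in how $K_X\rho=\rho$ is verified. The paper simply integrates, using
\[
\int_0^s x^{-1/2}(1-x)^{-3/2}\wrt x=2\sqrt{\tfrac{s}{1-s}},\qquad \int_s^1 x^{-3/2}(1-x)^{-1/2}\wrt x=2\sqrt{\tfrac{1-s}{s}},
\]
so that $(K_X\rho)(s)=\frac1\pi\bigl(\sqrt{s/(1-s)}+\sqrt{(1-s)/s}\bigr)=\rho(s)$ in one line, with no integration constant to pin down. Your differentiation route is a bit longer for this kernel, since you still need one explicit integral, or the mass argument, to kill the constant.

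However, it is exactly the argument the paper uses later in Theorem \ref{a13}. Your formula for $g'$ is the $p=0$ case of \eqref{a14}, and the paper fixes the constant there by your mass argument. The advantage of your route is that, read backwards, it \emph{derives} the stationary density from the first-order equation for $\rho'/\rho$ rather than requiring a candidate in hand. That is how the paper finds $\rho\propto(s^p+(1-s)^p)^{-1/p}$ for general $p>0$. The direct integration is shorter but is purely a verification of a known answer.
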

\begin{proof}Let $\rho$ be the arcsine density function,
\(\rho(s)=\frac1\pi\frac1{\sqrt{s(1-s)}}\mathbf1_{s\in(0,1)}.\)
Since $K_X$ is lower bounded by 1/2 on the entire domain, Doeblin minorization implies the stationary distribution is unique. It thus suffices to show that $\rho$ solves the equation \(K_X\rho=\rho.\) To this end,
\spliteq{}{
(K_X\rho)(s)
=\int_0^1K_X(s,x)\rho(x)\wrt x
&=\frac1{2\pi}\int_0^s\frac1{(1-x)^{3/2}x^{1/2}}\wrt x
+\frac1{2\pi}\int_s^1\frac1{(1-x)^{1/2}x^{3/2}}\wrt x
\\&=\frac1{\pi}\sqrt{\frac s{1-s}}
+\frac1{\pi}\sqrt{\frac {1-s}{s}}
=\rho(s).}
\end{proof}
We now turn our attention to a proof of \Cref{a3} that draws an explicit connection to Brownian motion and the classical arcsine laws.
Our proof relates $\set{X_t}_{t\in\N}$ to a new process $\set{Y_t}_{t\in\N}$, which is not a median-martingale, defined by a similar rule,
\eq{\label{a4}
\pare{Y_t\,|\,Y_{t-1}=x}\text{ is uniform on }(0,x)\text{ or uniform on }(1-x,1)\text{ with equal probability.}}
We will realize $Y_t$ as a \textit{hidden Markov process}, where the underlying chain is a sequence of continuous functions on the interval. In the underlying chain, the stationary distribution will be Brownian motion. The essential idea is to use two arcsine laws in concert: the positive occupation time of one Brownian motion in the process will become the location of the last zero in the next Brownian motion.

Let $W_t(\cdot)\in C^0([0,1])$ denote the state of the underlying chain at time $t\in\N$.
Set $\wt Y_t$ to be the positive occupation time of $W_t(\cdot)$,
\eq{\label{a5}
\wt Y_t=\int_0^1\mathbf1_{W_t(s)>0}\wrt s
.}
Given $W_{t-1}(\cdot)$, sample $W_t(\cdot)$ as follows. Sample a Brownian bridge
$B:[0,1]\to\R$ and Brownian motion conditioned to be positive $B_+:[0,1]\to\R$.
Note we will often refer to $B_+$ as a \emph{Brownian meander}, following \cite{b7}.
Let $R$ be an independent Rademacher random variable.
We take $B, B_+$, and $R$ to be independent of one another and of $W_{t-1}$.
Now set
\[
W_t(s)=\begin{cases}
   \sqrt{\wt Y_{t-1}}B\pare{\frac s{\wt Y_{t-1}}} & s<\wt Y_{t-1}\\
   \sqrt{1-\wt Y_{t-1}}RB_+\pare{\frac{s-\wt Y_{t-1}}{1-\wt Y_{t-1}}}& s > \wt Y_{t-1}.\end{cases}
\]
Our first claim is that this hidden Markov process $\wt Y_t$ \cref{a5} coincides with the simple Markov chain $Y_t$ \cref{a4}.
\begin{lemma}\label{a6}
For every $t\ge2$, there is a coupling such that
\[Y_{t-1}=\wt Y_{t-1}\implies Y_t=\wt Y_t\text{ a.s.}\]
\end{lemma}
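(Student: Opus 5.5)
Writing $y=\wt Y_{t-1}$, it suffices to show that, conditionally on $W_{t-1}$ (equivalently on $y$), the law of $\wt Y_t$ is $\frac12\mathrm{Unif}(0,y)+\frac12\mathrm{Unif}(1-y,1)$. This is exactly the transition law of $Y_t$ given $Y_{t-1}=y$ in \cref{a4}. Once the conditional laws agree, we couple $Y_t$ with $\wt Y_t$ by taking $Y_t:=\wt Y_t$ on the event $Y_{t-1}=\wt Y_{t-1}$. This is a legitimate realization of the chain \cref{a4}, because the transition depends only on the current state. On the complementary event we sample $Y_t$ independently from the kernel. This gives the claimed coupling.

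To compute the law, split the occupation time of $W_t$ at $s=y$. On $s>y$ the function $W_t$ equals $\sqrt{1-y}\,R\,B_+(\cdot)$. A Brownian meander is strictly positive on $(0,1]$ almost surely, so this piece contributes $(1-y)$ to the positive occupation time when $R=1$ and $0$ when $R=-1$, each with probability $1/2$. On $s<y$, substituting $u=s/y$ gives
\[\int_0^y\mathbf1_{W_t(s)>0}\wrt s=y\int_0^1\mathbf1_{B(u)>0}\wrt u .\]
By Lévy's arcsine law for the Brownian bridge, $\int_0^1\mathbf1_{B(u)>0}\wrt u\sim\mathrm{Unif}(0,1)$. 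We cite this classical fact; it also follows from the Vervaat transform or from the cyclic exchangeability of bridge increments. Write $U$ for this uniform variable, which is independent of $R$ and of $W_{t-1}$.

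Combining the two pieces, $\wt Y_t=yU$ when $R=-1$ and $\wt Y_t=yU+(1-y)$ when $R=1$. The first is uniform on $(0,y)$ and the second is uniform on $(1-y,1)$, each occurring with probability $1/2$. This is precisely \cref{a4}.

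The main obstacle is justifying that the bridge's positive occupation time is exactly uniform. I would cite Lévy's result rather than reprove it. The other points are routine: the meander has no zeros on $(0,1]$, the scaling factors $\sqrt y,\sqrt{1-y}$ do not affect signs, and $y\in(0,1)$ almost surely (which needs $t\ge2$, or a nondegenerate initial state), so the time-changes are well defined.
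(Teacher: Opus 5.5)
Your proposal is correct and follows the paper's proof. Like the paper, you split the occupation time at $\wt Y_{t-1}$, note that the meander piece contributes $(1-\wt Y_{t-1})$ or $0$ according to $R$, and apply L\'evy's uniform law for the positive occupation time of the Brownian bridge; your explicit coupling construction and the remark that the meander has no zeros on $(0,1]$ only spell out steps the paper leaves implicit.
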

\begin{proof}
Observe that
\spliteq{}{
\wt Y_t
=\int_0^{\wt Y_{t-1}} \mathbf1_{W_t(s)>0}\wrt s
+\int_{\wt Y_{t-1}}^1 \mathbf1_{W_t(s)>0}\wrt s
=\wt Y_{t-1}\int_0^1 \mathbf1_{B(s)>0}\wrt s
+\frac{R+1}2\pare{1-\wt Y_{t-1}}
.}
The integral $\int_0^1 \mathbf1_{B(s)>0}\wrt s$ is the positive occupation time of a Brownian bridge. In the same work proving the arcsine laws, L\'evy shows this is uniformly distributed on the unit interval \cite{b4}, cf. \cite{b8}.
When $R=-1$, $\wt Y_t$ is a uniform sample from $(0,\wt Y_{t-1})$. When $R=1$, $\wt Y_t$ is a uniform sample from $(1-\wt Y_{t-1},1)$. In particular, the transition kernel for $Y_t$ coincides with the transition kernel for $\wt Y_t$.
\end{proof}

We next argue that the stationary distributions of $\set{Y_t}_{t\in\N}$ and $\set{X_t}_{t\in\N}$ coincide.
\begin{lemma}\label{a7}
    The stationary distributions of $\set{Y_t}_{t\in\N}$ and $\set{X_t}_{t\in\N}$ coincide.
\end{lemma}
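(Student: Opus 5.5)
The plan is to relate the two chains through the reflection $x\mapsto 1-x$, applied at random. The map $x\mapsto 1-x$ sends $\mathrm{Unif}(0,x)$ to $\mathrm{Unif}(1-x,1)$ and $\mathrm{Unif}(x,1)$ to $\mathrm{Unif}(0,1-x)$. So the $X$-kernel from state $x$ is the mixture $\frac12\mathrm{Unif}(0,x)+\frac12\mathrm{Unif}(x,1)$. The $Y$-kernel from state $x$ is $\frac12\mathrm{Unif}(0,x)+\frac12\mathrm{Unif}(1-x,1)$, and its second component is the reflection of $\mathrm{Unif}(0,1-x)$.

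The comparison is cleanest for laws that are symmetric under reflection. Write $\sigma$ for the reflection and $\mu^\sigma$ for the pushforward of a law $\mu$. Let $\mu$ be a symmetric probability measure on $(0,1)$, i.e.\ $\mu^\sigma=\mu$. Then
\[
\mu K_Y(A)=\tfrac12\int \mathrm{Unif}(0,x)(A)\,\mu(\wrt x)+\tfrac12\int \mathrm{Unif}(0,1-x)(\sigma A)\,\mu(\wrt x).
\]
In the second term, substitute $x\mapsto 1-x$ using the symmetry of $\mu$. It becomes $\tfrac12\int \mathrm{Unif}(0,x)(\sigma A)\,\mu(\wrt x)$. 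Similarly,
\[
\mu K_X(A)=\tfrac12\int \mathrm{Unif}(0,x)(A)\,\mu(\wrt x)+\tfrac12\int \mathrm{Unif}(x,1)(A)\,\mu(\wrt x).
\]
In the second term, note $\mathrm{Unif}(x,1)(A)=\mathrm{Unif}(0,1-x)(\sigma A)$, and the same symmetry substitution gives $\tfrac12\int\mathrm{Unif}(0,x)(\sigma A)\,\mu(\wrt x)$. Hence $\mu K_X=\mu K_Y$ for every symmetric $\mu$.

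The next step is to check that both stationary laws are symmetric and unique. Uniqueness holds for both chains by the same Doeblin argument used in \Cref{a3}: each kernel density is at least $1/2$ on all of $(0,1)$.

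For $X$, the kernel is equivariant under reflection: $K_X(1-s,1-x)=K_X(s,x)$. So if $\pi$ is stationary, then so is $\pi^\sigma$, and uniqueness forces $\pi^\sigma=\pi$.

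The $Y$-kernel is not equivariant, and this is the main obstacle. I expect to handle it with a direct computation. For any $\mu$, the law $\mu K_Y$ has density
\[
g(s)=\tfrac12\int_s^1 \frac{\mu(\wrt x)}{x}+\tfrac12\int_{1-s}^1\frac{\mu(\wrt x)}{x},
\]
since $\mathrm{Unif}(1-x,1)$ contains $s$ exactly when $x>1-s$. This expression is manifestly invariant under $s\mapsto 1-s$. So $\mu K_Y$ is always symmetric, and in particular the stationary law $\pi_Y=\pi_YK_Y$ is symmetric.

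To conclude, let $\pi_X$ be the stationary law of $X$, which is symmetric. By the first step, $\pi_XK_Y=\pi_XK_X=\pi_X$, so $\pi_X$ is stationary for $Y$. Uniqueness then gives $\pi_X=\pi_Y$. The same conclusion follows starting from $\pi_Y$, using its symmetry.
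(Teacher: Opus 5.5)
Your strategy is the paper's. Both stationary laws are reflection-symmetric, and $K_X$, $K_Y$ act identically on symmetric laws; the paper's identity $K_X(s,x)+K_X(s,1-x)=K_Y(s,x)+K_Y(s,1-x)$ is exactly your first step. The gap is the uniqueness claim for $Y$: it is false that $K_Y(s,x)\ge\frac12$ on all of $(0,1)^2$. For $x<\frac12$ the intervals $(0,x)$ and $(1-x,1)$ do not cover $(0,1)$, and $K_Y(s,x)=0$ for $s\in(x,1-x)$; from $x=0.1$ the $Y$-chain cannot land near $\frac12$. So your concluding step ``uniqueness then gives $\pi_X=\pi_Y$,'' which uses uniqueness of the $Y$-stationary law, is unsupported as written. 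The paper never needs a minorization of $K_Y$. It applies Doeblin only to the symmetrized kernel $\frac12\bigl(K_X(s,x)+K_X(s,1-x)\bigr)\ge\frac12$, which both stationary densities fix.

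The repair is already in your last sentence; make it the main argument. Any $Y$-stationary $\pi_Y$ equals $\pi_Y K_Y$ and is therefore symmetric. Hence $\pi_Y K_X=\pi_Y K_Y=\pi_Y$, and uniqueness for $X$, where $K_X\ge\frac12$ really holds, gives $\pi_Y=\pi_X$. Combined with $\pi_X K_Y=\pi_X$, which supplies existence, this shows $\pi_X$ is the unique $Y$-stationary law with no separate uniqueness result for $Y$. If you do want one directly: from any state the first $Y$-step lands in $[\frac12,1)$ with probability at least $\frac12$. From $y\ge\frac12$ the next step has density at least $\frac1{2y}\ge\frac12$ on $(0,1)$, so the two-step kernel is at least $\frac14$.

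Also fix a slip in your first computation. $\mathrm{Unif}(1-x,1)$ is the reflection of $\mathrm{Unif}(0,x)$, not of $\mathrm{Unif}(0,1-x)$, whose reflection is $\mathrm{Unif}(x,1)$. As displayed, your intermediate expression for $\mu K_Y(A)$ is literally $\mu K_X(A)$, so that step assumes what it is meant to show. With the correct identity you get $\mu K_Y(A)=\frac12\int\mathrm{Unif}(0,x)(A)\,\mu(\wrt x)+\frac12\int\mathrm{Unif}(0,x)(\sigma A)\,\mu(\wrt x)$ for every $\mu$, with no symmetry needed. Your conclusion $\mu K_X=\mu K_Y$ for symmetric $\mu$ then stands.
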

\begin{proof}
Both walks can be described in terms of kernel functions. Let
\[
K_X(s,x)=
\frac1{2x}\mathbf1_{\set{s<x}}
+\frac1{2(1-x)}\mathbf1_{\set{s>x}}
,\quad\quad
K_Y(s,x)=
\frac1{2x}\mathbf1_{\set{s<x}}
+\frac1{2x}\mathbf1_{\set{s>1-x}}
\]
These kernels carry the distribution at time $t$ to the distribution at time $t+1$. For $Z\in\set{X,Y}$, let $\rho_Z$ be the density of the stationary distribution, i.e. a solution to
\(\rho_Z(s)=\int K_Z(s,x)\rho_Z(x)\wrt x.\)
Uniqueness of $\rho_X$ follows from Doeblin minorization: $K_X(s,x)\ge\frac12$ for all $s,x\in(0,1)$. We claim that $\rho_X$ and $\rho_Y$ are symmetric around $\frac12$. To see that $\rho_Y$ is symmetric, observe that $K_Y(\cdot,b)$ is symmetric for each $x$, and therefore the range of $K_Y$ is functions which are symmetric. To see that $\rho_X$ is symmetric, observe that $K_X(s,x)=K_X(1-s,1-x)$, so $\rho_X(1-\cdot)$ is also stationary.
A consequence of this claim is that $\rho_X$ and $\rho_Y$ both satisfy the
following symmetrized version of the stationary equation,
\[\rho_Z(s)=\int\frac{K_Z(s,x)+K_Z(s,1-x)}2\rho_Z(x)\wrt x.\]
Now notice that
\(K_X(s,x)+K_X(s,1-x)=K_Y(s,x)+K_Y(s,1-x)\)
so this equation is the same for $Z=X$ and $Z=Y$. Again by Doeblin minorization, $K_X(s,x)\ge\frac12$ for all $s,x\in(0,1)$ implies the solution is unique, so $\rho_X=\rho_Y$.
\end{proof}

Our final step is to show that the stationary distribution of $\wt Y_t$ is the arcsine distribution.
An arcsine law tells us that if $W_t(\cdot)$ is Brownian motion, then $\wt Y_t$ is arcsine-distributed.
The final piece of the puzzle is that the stationary distribution of the chain $\set{W_t}_{t\in\N}$ is Brownian motion. This will also follow from an arcsine law: If $W_t(\cdot)$ is Brownian motion, its last zero is arcsine-distributed.    

\begin{lemma}\label{a8}
For each $t\ge2$, if $W_{t-1}(\cdot)$ is Brownian motion then $W_t(\cdot)$ is Brownian motion.
\end{lemma}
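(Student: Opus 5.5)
We plan to prove the lemma through the classical \emph{last-zero decomposition} of Brownian motion. Let $W$ be a standard Brownian motion on $[0,1]$ and let $g=\sup\{s\le 1: W(s)=0\}$ be its last zero. The facts we use, all standard (cf.\ \cite{b7}), are the following.
\begin{enumerate}
\item[(i)] $g$ is arcsine distributed. This is L\'evy's arcsine law for the last zero.
\item[(ii)] Conditionally on $g$, the rescaled pre-$g$ piece $\bigl(g^{-1/2}W(gu)\bigr)_{u\in[0,1]}$ is a standard Brownian bridge.
\item[(iii)] Conditionally on $g$, the rescaled post-$g$ piece $\bigl((1-g)^{-1/2}\abs{W(g+(1-g)u)}\bigr)_{u\in[0,1]}$ is a standard Brownian meander.
\item[(iv)] The sign $\sign W(1)$ is an independent Rademacher variable.
\item[(v)] The triple consisting of $g$, the bridge and the meander is mutually independent.
\end{enumerate}
Put differently, a Brownian motion is exactly the process obtained by sampling $g$ from the arcsine law, independent $B$, $B_+$ and $R$, and gluing them together precisely as in the definition of $W_t$ with $\wt Y_{t-1}$ replaced by $g$.

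With this decomposition in hand the argument is short.
\begin{enumerate}
\item[1.] Assume $W_{t-1}$ is Brownian motion. By the arcsine law for positive occupation time, $\wt Y_{t-1}=\int_0^1\mathbf 1_{W_{t-1}(s)>0}\wrt s$ is arcsine distributed.
\item[2.] By construction, $B$, $B_+$ and $R$ are independent of $W_{t-1}$, hence of $\wt Y_{t-1}$.
\item[3.] Therefore the quadruple $(\wt Y_{t-1},B,B_+,R)$ has the same joint law as $(g,\text{bridge},\text{meander},\text{sign})$ from the decomposition above.
\item[4.] Since $W_t$ is a fixed measurable function of this quadruple, and Brownian motion is the same function applied to the decomposition quadruple, $W_t$ is Brownian motion.
\end{enumerate}
One small point needs checking: the gluing must reproduce $W$ exactly. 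At $s=g$ both pieces vanish, so the glued path is continuous and equals the original path. Also $\sign W(1)=\sign W(s)$ for every $s>g$, so multiplying the meander by $R$ recovers the post-$g$ path.

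The main obstacle is justifying the decomposition (ii)--(v), in particular the conditional independence and the exact identification of the laws. We would cite it as a standard consequence of excursion theory, or of the path decomposition at the last zero in Revuz--Yor or Pitman. If a self-contained argument is wanted, the first approach we would try is a finite-dimensional check. Condition on $g\in\wrt u$, use the Markov property at time $u$ together with the reflection principle to write the joint density of $(W(s_1),\dots,W(s_k))$ given $g=u$, and verify that it factors into the bridge density on $[0,u]$ times the meander density on $[u,1]$ (with a symmetric sign). The bridge factor follows because, conditioned on $W(u)=0$ and on no zero after $u$, the pre-$u$ path is a bridge by the Markov property. The post-$u$ factor is Brownian motion started at $0$ and conditioned to avoid $0$, which is the meander by Brownian scaling. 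Scaling arguments then handle the $\sqrt{u}$ and $\sqrt{1-u}$ normalizations.
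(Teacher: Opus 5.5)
Your proposal is correct and follows essentially the same route as the paper. Both arguments rest on the last-zero path decomposition of Brownian motion from Revuz--Yor: an arcsine last zero, independent of a Brownian bridge, a Brownian meander, and the sign of $W(1)$. Both then use the occupation-time arcsine law to see that $\wt Y_{t-1}$ is arcsine distributed and independent of $(B,B_+,R)$, and conclude that $W_t$ is the same gluing map applied to an identically distributed quadruple.

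You also check explicitly that the glued path is continuous at the last zero and that multiplying the meander by the sign recovers the path after $g$, which the paper leaves implicit. Your rescalings $g^{-1/2}W(gu)$ and $(1-g)^{-1/2}\abs{W(g+(1-g)u)}$ are the correct forward maps. The paper's displayed formulas for $B$ and $B_+$ are instead written in the inverse direction.
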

\begin{proof}
Let $W(\cdot)$ be standard Brownian motion on $[0,1]$ and $Y$
its last zero.
By \cite[Chapter XII, Exercise (3.8)]{b7},
\begin{equation}
    t \mapsto B(t) = \sqrt{Y} W\pare{\tfrac{t}{Y}},
    \qquad Y, \qquad \text{and} \qquad
    t \mapsto B_+(t) = \sqrt{1 - Y} \abs{W\pare{\tfrac{t-Y}{1-Y}}}
\end{equation}
are independent.
Moreover, $B$ is a Brownian bridge, $Y$ is an arcsine-distributed random variable,
and $B_+$ is a Brownian meander, i.e.\ a Brownian motion on $[0,1]$ conditioned to be positive.
Furthermore, one can recover $W$ from $B,Y,B_+$, and  
$R = \sign(W(1))$ which is a Rademacher variable independent from $B,Y,B_+$.

Now since $\wt Y_{t-1}$ is arcsine distributed and independent from $B, B_+$, and $R$, we may
substitute it for $Y$ in the above inverse construction of $W$ and we again obtain
a Brownian motion.
This construction is the definition of $W_t(\cdot)$, so this finishes the proof.
\end{proof}

With these lemmas in place, we're now ready to give an independent proof that the arcsine distribution is stationary.

\begin{proof}[Second proof of \Cref{a3}]
Let $W_1$ be Brownian motion. Then $W_t(\cdot)$ is Brownian motion for all $t$ by \Cref{a8}. Then $\wt Y_t$ is arcsine-distributed for all $t$ by an arcsine law. Then $Y_t$ is arcsine-distributed for all $t$ by \Cref{a6}. In particular, the arcsine distribution is the stationary distribution of the Markov chain $Y_t$, which coincides with the stationary distribution of $X_t$ by \Cref{a7}.
\end{proof}

\section{Generalizations}
In this section, we generalize \cref{a2} to a one-parameter family of random walks, of which \cref{a2} is a ``tipping point'' with different behavior on either side. This family is indexed by a parameter $p\in\R$ and is defined by the transition rule
\eq{\label{a9}
\pare{X_t\,|\,X_{t-1}=x}=\begin{cases}
    \text{uniform on }(0,x)\text{ with probability proportional to }{x^p}\\
    \text{uniform on }(x,1)\text{ with probability proportional to }{(1-x)^p}\\
\end{cases}.}
When $p<0$, these probabilities blow up at the end points $x\in\set{0,1}$, so it's natural to extend the definition of the chain to include $\set{0,1}$ as absorbing states when $p<0$. We state the consequence of this in \Cref{a10}.
We can be more precise in the specification of the chain by describing its transition kernel,
\spliteq{\label{a11}}{
K_p(s,x)
&=\frac{\Pr(\text{jump towards }0)}x\mathbf1_{0<s<x}
+\frac{\Pr(\text{jump towards }0)}{1-x}\mathbf1_{x<s<1}
\\&=\frac{x^{p-1}}{x^p+(1-x)^p}\mathbf1_{0<s<x}
+\frac{(1-x)^{p-1}}{x^p+(1-x)^p}\mathbf1_{x<s<1}
.}
When $p=0$, this is just the original walk \cref{a2}. The chain for $p<0$ exhibits two interesting phenomena. First, it will \textit{not} approach a stationary distribution in total variation distance. Second, it \textit{will} exhibit a property similar to being a martingale and a median martingale, which we describe shortly. On the other hand, the chain for $p>0$ exhibits the opposite phenomena. It \textit{will} approach a stationary distribution, but does not exhibit this martingale-like property.
Thus, $p=0$ is a ``sweet spot'' enjoying both a martingale-like property and strong convergence to a stationary distribution. 

The generalization of martingales and median-martingales we call $\ell_q$-minimizing-martingales. Recall that the mean and median can be expressed as the $\ell_2$ and $\ell_1$ minimizers respectively of the expected distance to the random variable, i.e.
\[
\E(X)=\argmin_z\E\abs{z-X}^2,\quad
\text{median}(X)=\argmin_z\E\abs{z-X}.\]
We can generalize this to a class of $\ell_q$-minimizers for any $q>0$,
\[\medish_q(X)=\argmin_z\E\abs{z-X}^q.\]
An $\ell_q$-minimizing-martingale is then a process $\set{X_t}_{t\in\N}$ such that
\eq{\label{a12}
\medish_q(X_t\,|\,X_{t-1},\ldots X_1)=X_{t-1}.}
When $q>1$, the functional $f_X(z)=\E\abs{z-X}^q$ is differentiable and convex, and therefore the minimizer is the unique solution to $f'_X(z)=0$. One can directly compute for each $x\in(0,1)$ that $f'_{X_t|X_{t-1}=x}(x)=0$ if $p=1-q$, which results in the following proposition.
\begin{proposition}
    The process \cref{a9} is an $\ell_q$-minimizing martingale if $q\ge1$ and $p=1-q$.
\end{proposition}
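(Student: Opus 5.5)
Because the chain \cref{a9} is Markov, the conditional law of $X_t$ given $X_{t-1},\ldots,X_1$ depends only on $X_{t-1}=x$. It is the mixture with density $K_p(\cdot,x)$ from \cref{a11}. The plan is therefore to fix $x\in(0,1)$ and show that $z=x$ minimizes $f(z)=\E\abs{z-X}^q$, where $X\sim K_p(\cdot,x)$ and $p=1-q$. (When $x\in\set{0,1}$ is absorbing, the law is a point mass at $x$ and the claim is trivial.)

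First I treat $q>1$. The map $z\mapsto\abs{z-s}^q$ is $C^1$ and convex, and $X$ is bounded, so $f$ is differentiable and convex, with
\[f'(z)=q\,\E\big[\abs{z-X}^{q-1}\sign(z-X)\big].\]
Hence $x$ is a minimizer as soon as $f'(x)=0$.

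To check this, write $c=x^p+(1-x)^p$ and compute the two pieces of the kernel \cref{a11} separately:
\[\int_0^x (x-s)^{q-1}\frac{x^{p-1}}{c}\wrt s=\frac{x^{p-1}x^q}{qc},\qquad \int_x^1 (s-x)^{q-1}\frac{(1-x)^{p-1}}{c}\wrt s=\frac{(1-x)^{p-1}(1-x)^q}{qc}.\]
It follows that
\[f'(x)=\frac{x^{p+q-1}-(1-x)^{p+q-1}}{c},\]
which vanishes identically in $x$ exactly when $p+q-1=0$, i.e.\ $p=1-q$. Convexity then gives that $x$ is a minimizer. Strict convexity for $q>1$ makes it the unique one, so $\medish_q$ is well defined and equals $x$, which is \cref{a12}.

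The case $q=1$ (so $p=0$) is the main obstacle, since $f$ is not differentiable there. However, here the chain is the original walk \cref{a2}, and the claim is the median-martingale property. The law puts mass $\frac12$ on $(0,x)$ and mass $\frac12$ on $(x,1)$, so $\Pr(X\le x)=\Pr(X\ge x)=\frac12$. In the paper's convention this means $\text{median}(X)=x$. Equivalently, the one-sided derivatives of $f(z)=\E\abs{z-X}$ at $x$ are $\Pr(X<x)-\Pr(X>x)\le 0$ from the left and $\Pr(X\le x)-\Pr(X\ge x)\ge0$ from the right, so $x$ minimizes the convex function $f$. Since the law has a positive density around $x$, the minimizer is unique as well.
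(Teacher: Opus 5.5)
Your proposal is correct and is essentially the paper's proof: you reduce to a fixed current state $x$ via the Markov property, differentiate $z\mapsto\E\abs{z-X}^q$ at $z=x$ to get $\frac{x^{p+q-1}-(1-x)^{p+q-1}}{x^p+(1-x)^p}$, invoke convexity, and treat $q=1$ as the median-martingale case, which you verify directly; you also make the strict-convexity step for uniqueness explicit. One harmless slip: the one-sided derivatives of $\E\abs{z-X}$ at $x$ are $\Pr(X<x)-\Pr(X\ge x)$ from the left and $\Pr(X\le x)-\Pr(X>x)$ from the right. Since this law has no atoms, both equal $0$, so $f$ is in fact differentiable at $x$ even when $q=1$.
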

\begin{proof}
    The case of $q=1$, $p=0$ is exactly the median-martingale already considered so we proceed to the $q>1$ case. Condition on $X_1=x$ and let $f(z)=\E\abs{z-X_2}^q$. Take its derivative and evaluate at $z=x$,
    \spliteq{}{
    f'(x)
    &=q\E\pare{\abs{x-X_2}^{q-1}\sign(z-X_2)  }
    \\&=q\int_0^1K_p(s,x)\abs{x-X_2}^{q-1}\sign(x-X_2)\wrt s
    \\&=\frac q{x^p+(1-x)^p}\pare{x^{p-1}\int_0^x\abs{x-s}^{q-1}\wrt s-(1-x)^{p-1}\int_x^1\abs{x-s}^{q-1}\wrt s}
    \\&=\frac1{x^p+(1-x)^p}\pare{x^{p+q-1}-(1-x)^{p+q-1}}
    }
    which indeed vanishes when $p+q=1$. $f$ is the average of differentiable and convex functions, so the zero of the derivative is the unique minimizer.
\end{proof}
\begin{remark}
    Notice this proposition only applies for $p\le0$. 
One may compute that $f'_{X_t|X_{t-1}=x}(x)=0$ for $q<1$ as well, but $f_{X_t|X_{t-1}=x}(\cdot)$ is no longer convex in this case and $x$ may very well be point of inflection, not a minimum, of $f_{X_t|X_{t-1}=x}(\cdot)$. We therefore do not obtain a similar property for $p>0$. 
\end{remark}

We now turn our attention to the stationary distribution. Our derivation for the density function for $p>0$ follows a technique presented in \cite{b3}.

\begin{theorem}\label{a13}
For any $p>0$, the walk \cref{a9} converges to the stationary distribution with density
\[\rho(x)=\frac1{Z_p}\pare{\frac2{x^p + (1-x)^p}}^{1/p}\cdot\mathbf1_{x\in(0,1)}\]
where $Z_p\in(2\log 2,\pi)$ is a constant depending on $p$. 
\end{theorem}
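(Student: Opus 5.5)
The plan has three parts: uniqueness and convergence from a minorization argument, stationarity of $\rho$ via the differentiation-under-the-integral trick of \cite{b3}, and the bounds on $Z_p$ from the monotonicity of power means.

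\textbf{Convergence.} For $p>0$ we cannot reuse the uniform bound $K_X\ge\frac12$, because $x^{p-1}/(x^p+(1-x)^p)\to0$ as $x\to0$ when $p>1$. Instead I would use a two-step Doeblin minorization. From any $x$, the longer of $(0,x)$ and $(x,1)$ is chosen with probability at least $\frac12$, since $p>0$. That interval has length at least $\frac12$ and contains $(\frac14,\frac12)$ or $(\frac12,\frac34)$. So one step lands in $(\frac14,\frac34)$ with probability at least some absolute constant. For $x\in[\frac14,\frac34]$, both branch densities in \cref{a11} are bounded below by a constant $c_p>0$ on their respective intervals. Hence $K_p^2(s,x)\ge c'_p>0$ uniformly in $s,x$. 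This gives a unique stationary law and geometric convergence in total variation, so it remains to identify $\rho$.

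\textbf{Stationarity.} Write $D(x)=x^p+(1-x)^p$. Then
\[(K_p\rho)(s)=\int_s^1\frac{x^{p-1}}{D(x)}\rho(x)\wrt x+\int_0^s\frac{(1-x)^{p-1}}{D(x)}\rho(x)\wrt x.\]
Differentiating in $s$ gives
\[(K_p\rho)'(s)=\rho(s)\,\frac{(1-s)^{p-1}-s^{p-1}}{D(s)}=-\frac1p\,\frac{D'(s)}{D(s)}\,\rho(s).\]
A stationary density must satisfy $\rho'=-\frac1p(D'/D)\rho$. Every solution is of the form $CD^{-1/p}$, which is exactly the proposed $\rho$. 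Conversely, for this $\rho$, the functions $K_p\rho$ and $\rho$ have the same derivative, so they differ by a constant. To pin the constant, I compare them at $s\to0^+$, where $\rho(0^+)=2^{1/p}/Z_p$ and
\[(K_p\rho)(0^+)=\frac{2^{1/p}}{Z_p}\int_0^1x^{p-1}D(x)^{-1-1/p}\wrt x.\]
The substitution $u=x/(1-x)$ reduces the integral to
\[\int_0^\infty u^{p-1}(1+u^p)^{-1-1/p}\wrt u=\Bigl[-(1+u^p)^{-1/p}\Bigr]_0^\infty=1.\]
So the two limits agree and $K_p\rho=\rho$. Some care is needed to justify the limits and integrability near the endpoints, but the integrand is integrable for $p>0$.

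\textbf{Bounds on $Z_p$.} Here $Z_p=\int_0^1 M_p(x,1-x)^{-1}\wrt x$, where $M_p(a,b)=\bigl(\tfrac{a^p+b^p}{2}\bigr)^{1/p}$ is the power mean. Power means are strictly increasing in $p$ whenever $a\ne b$, so $Z_p$ is strictly decreasing in $p$. As $p\to0^+$, $M_p$ tends to the geometric mean, and $Z_p$ tends to $\int_0^1(x(1-x))^{-1/2}\wrt x=\pi$. As $p\to\infty$, $M_p$ tends to the maximum, and $Z_p$ tends to $\int_0^1\max(x,1-x)^{-1}\wrt x=2\log2$. Strict monotonicity then puts $Z_p$ strictly inside $(2\log2,\pi)$ for every finite $p>0$.

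I expect the main obstacle to be the convergence step, where the minorization constants must be chosen carefully. The boundary-constant computation in the stationarity step is a secondary point, but it is where the algebra could go wrong.
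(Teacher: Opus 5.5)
Your proposal is correct and follows the same three-part route as the paper. Doeblin minorization gives convergence and uniqueness, differentiating the stationary equation gives $\rho'=-\frac1p(D'/D)\rho$ and hence $\rho=CD^{-1/p}$, and power-mean monotonicity with the limiting values $\pi$ and $2\log 2$ bounds $Z_p$. The only differences are local. You use a two-step minorization where the paper uses a one-step bound ($K_p\ge\frac12$ everywhere for $p\le1$, and for $s$ near $\frac12$ when $p\ge1$). You also fix the additive constant by the boundary computation at $s\to0^+$ (your integral does equal $1$), whereas the paper gets $c=0$ more cheaply by integrating over $s$ and using that $K_p$ is stochastic.
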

\begin{proof}
Fix $p>0$. If $p\le1$, then $K_p(s,x)\ge\frac12$ everywhere. If $p\ge1$, then $K_p(s,x)\ge\frac12$ for  $s\in(1/2-1/(10p),1/2+1/(10p))$ and all $x$. This establishes convergence by Doeblin minorization.
Suppose the existence of a positive, differentiable density $\rho$ on $(0,1)$ which is stationary. 
Take the derivative of both sides of the equation $\rho(s)=\int_0^1K(s,x)\rho(x)\wrt x$ to obtain
\spliteq{\label{a14}}{
\deriv s{}\rho(s)
&=\deriv s{}\pare{\int_s^1\frac{x^{p-1}}{x^p+(1-x)^p}\rho(x)\wrt x
+\int_0^s\frac{(1-x)^{p-1}}{x^p+(1-x)^p}\rho(x)\wrt x}
\\&=-\frac{s^{p-1}}{s^p+(1-s)^p}\rho(s)
+\frac{(1-s)^{p-1}}{s^p+(1-s)^p}\rho(s)
\\&=-\frac{s^{p-1}-(1-s)^{p-1}}{s^p+(1-s)^p}\rho(s).}
Dividing both sides by $\rho(s)$ gives
\[\deriv s{}\log\rho(s)
=-\frac1p\deriv s{}\log(s^p+(1-s)^p)
.\]
Integrate and take the exponential to obtain
\eq{\label{a15}\rho(s)=\frac C{(s^p+(1-s)^p)^{1/p}}.}
for some $C$ depending on $p$. Notice that this proves uniqueness: if $\rho$ is stationary, then the above steps show it must have the form \cref{a15}.
We now argue that $\rho$ is integrable and compute its normalization constant. 
First, insert a factor of $2^{1/p}$ so that we may express
\[
\rho(s)=\frac1{Z_p}\frac1{\mean_p(s,1-s)}
\]
where $\mean_p(a,b)=\pare{\frac{a^p+b^p}2}^{1/p}$ is known as the $\ell_p$-mean of the numbers $a$ and $b$, and $Z_p$ is the remaining factor of the normalization constant required to make $\int_0^1\rho(s)\wrt s=1$.
By the power-mean inequality, $\mean_p(a,b)$ is non-decreasing in $p$ and strictly increasing for $a\neq b$, and therefore $Z_p$ is strictly decreasing. By the observation that
\[\lim_{p\to0}\mean_p(a,b)=\sqrt{ab}\qand\lim_{p\to\infty}\mean_p(a,b)=\max(a,b),\]
we obtain finite, positive upper and lower bounds on $Z_p$ by computing $Z_0=\pi$ and $Z_\infty=2\log2$. This establishes the integrability of $\rho$.

Finally, using \cref{a14}, one verifies that $\rho$ indeed solves the differential equation $\rho'(s)=\deriv s{}\int_0^1K(s,x)\rho(x)\wrt x$ so we have $\rho(s)=\int_0^1K(s,x)\rho(x)\wrt x+c$ for some constant $c$, but since $K$ is stochastic, integrating out $s$ reveals $c=0$, so $\rho$ is indeed stationary.
\end{proof}
\begin{remark}\label{a10}
The above proof did not use that $p>0$ until the last step, in the computation of $Z_p$.
If $p<0$, no such $Z_p$ making $\int_0^1\rho(s)\wrt s=1$ exists. In particular, $\frac1{\mean_p(s,1-s)}$ is not integrable. To see this, compute
\[
\lim_{s\to0}\frac{\mean_p(s,1-s)}{s}
=\lim_{s\to0}\pare{\frac12+\frac{(1-s)^p}{2s^p}}^{1/p}=\frac12\]
so $\frac1{\mean_p(s,1-s)}$ blows up as $O(1/s)$ for small $s$. 
For $p<0$, we can naturally extend the definition \cref{a9} to include 0 and 1 as absorbing states. In this case, any Bernoulli random variable is a stationary distribution, and no continuous stationary distribution exists. For the walk's convergence in this case, see \Cref{a16}
\end{remark}
\begin{remark}
    Notice that $\rho(x)$ is proportional to the reciprocal of the $\ell_p$-mean of the distances from $x$ to each of the endpoints of the interval. In the limit of $p\to0$, this is the \textnormal{geometric mean} of those distances, $\sqrt{x(1-x)}$, which makes $\rho$ exactly the arcsine distribution. In particular, \cref{a13} recovers \cref{a3}.
\end{remark}
\begin{remark}
    There is a transition at $p=1$, where $\rho$ is uniform.
    For $p<1$, $\rho$ is bimodal with peaks at 0 and 1, and for $p>1$, $\rho$ is unimodal with a peak at 1/2.
    In the limit of $p\to\infty$, the density is $\rho(x)\propto\frac1{\max(x,1-x)}$. This is the chain that at state $x$, picks the further endpoint of the unit interval and jumps a uniform amount towards it.
\end{remark}

\begin{theorem}\label{a16}
For every $p<0$, $\set{X_t}_{t\in\N}$ given by \cref{a9} almost surely converges to
\[
\Ber\pare{
\frac12-\frac{x^{p-1}-\left(1-x\right)^{p-1}}{2\left(x^{p}+\left(1-x\right)^{p}\right)^{\frac{p-1}{p}}}
}
\]
\end{theorem}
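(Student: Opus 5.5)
The plan is to read the statement as follows: started from $X_0=x\in(0,1)$, the chain \cref{a9} converges almost surely to a $\set{0,1}$-valued limit $X_\infty$, and $\Pr(X_\infty=1)=h(x)$, where $h$ is the displayed Bernoulli parameter. I will prove this with a bounded harmonic function and the martingale convergence theorem. The first observation is a simplification of $h$. Set $S(x)=x^p+(1-x)^p$ and $g(x)=S(x)^{1/p}=2^{1/p}\mean_p(x,1-x)$. Then
\[
g'(x)=\pare{x^{p-1}-(1-x)^{p-1}}S(x)^{1/p-1},
\]
and since $-(p-1)/p=1/p-1$, the formula in the statement is exactly $h=\tfrac12-\tfrac12 g'$.

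\textbf{Step 1: $h$ is harmonic.} Harmonicity means
\[
h(x)=\int_0^1K_p(s,x)h(s)\wrt s
=\frac{x^{p-1}H(x)+(1-x)^{p-1}\pare{H(1)-H(x)}}{S(x)},
\qquad H(x)=\int_0^x h.
\]
With $h=\tfrac12-\tfrac12g'$ we get $H(x)=\tfrac x2-\tfrac12\pare{g(x)-g(0)}$. For $p<0$ we have $g(0)=g(1)=0$, because $S\to\infty$ at the endpoints; hence $H(1)=\tfrac12$. Substituting, harmonicity becomes an algebraic identity between $g$ and $g'$:
\[
S\pare{1-g'}=x^{p-1}\pare{x-g}+(1-x)^{p-1}\pare{1-x+g}.
\]
The right side equals $S-g\pare{x^{p-1}-(1-x)^{p-1}}$. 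Since $g\pare{x^{p-1}-(1-x)^{p-1}}=S^{1/p}\pare{x^{p-1}-(1-x)^{p-1}}=S\,g'$, the identity holds. This is the same differentiate-the-kernel bookkeeping as in the proof of \cref{a13}.

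\textbf{Step 2: boundary behaviour and monotonicity of $h$.} As $x\to0^+$ with $p<0$, the term $x^{p-1}$ dominates and $S^{1/p-1}\sim x^{1-p}$. Hence $g'(x)\to1$ and $h(x)\to0$; symmetrically $h(x)\to1$ as $x\to1^-$. Writing $g'=\pare{x^{p-1}-(1-x)^{p-1}}/S^{1-1/p}$ and comparing exponents shows $h$ takes values in $[0,1]$. I would also check that $h$ is continuous and strictly increasing on $(0,1)$. This amounts to $g''<0$, i.e.\ strict concavity of $x\mapsto\mean_p(x,1-x)$ for $p<1$. I would verify it by a direct computation of $g''$, which should come out as a positive multiple of $(p-1)$ times a positive quantity.

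\textbf{Step 3: convergence to an endpoint.} By Step 1, $h(X_t)$ is a bounded martingale, so it converges almost surely. Because $h$ is a strictly monotone homeomorphism of $[0,1]$, $X_t$ itself converges almost surely to some $X_\infty$. Now suppose $X_\infty=y\in(0,1)$. Near $y$, each step is uniform on an interval of length bounded below: the interval $(0,X_t)$ or $(X_t,1)$, each of length at least $\min(y,1-y)/2$. So the probability that $\abs{X_{t+1}-X_t}<\epsilon$ is at most $C\epsilon$ uniformly in such $t$. A conditional Borel--Cantelli (L\'evy's extension) argument then rules out convergence to an interior point. Consequently $X_\infty\in\set{0,1}$, with $0$ and $1$ absorbing as in \Cref{a10}.

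\textbf{Step 4: identify the limit law.} By bounded convergence and Step 2,
\[
h(x)=\E\, h(X_0)=\E\, h(X_\infty)=\Pr(X_\infty=1),
\]
which is the claimed Bernoulli law.

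I expect the main obstacle to be Step 3: cleanly excluding interior limits, and handling the endpoint limits of $h$ rigorously (the $0\cdot\infty$ forms in $g'$). If strict monotonicity of $h$ proves messy, the fallback is to apply the interior-limit exclusion directly. For that I would use the martingale $h(X_t)$ together with the fact that $h$ is non-constant on every subinterval, so that any convergent $h(X_t)$ forces $X_t$ into a shrinking set incompatible with macroscopic uniform jumps.
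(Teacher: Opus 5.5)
Your proof is correct and follows essentially the paper's argument: your $h=\tfrac12-\tfrac12 g'$ is exactly $(1+f)/2$ for the paper's martingale function $f=-g'$, and you then use Doob's theorem, exclusion of interior limits, and bounded convergence to identify the Bernoulli parameter, as the paper does. You also fill in steps the paper only asserts, namely the explicit harmonicity identity, strict monotonicity of $h$ (indeed $g''=(p-1)S^{1/p-2}x^{p-2}(1-x)^{p-2}<0$, as you predicted), and a conditional Borel--Cantelli argument in place of the paper's brief appeal to ``no absorbing states in $(0,1)$''.
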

\begin{proof}
Define the continuous bijection $f:[0,1]\to[-1,1]$,
\[f(x)=-\frac{x^{p-1}-\left(1-x\right)^{p-1}}{\left(x^{p}+\left(1-x\right)^{p}\right)^{\frac{p-1}{p}}}=-\deriv x{}\pare{x^p+(1-x)^p}^{1/p}.\]
Direct integration shows that $f(X_t)$ is a martingale. By Doob's martingale convergence theorem, $X_t$ almost surely converges. Since there are no absorbing states in the open interval, $(0,1)$, we must have $\lim_{t\to\infty} X_t\in\set{0,1}$ almost surely. In particular, $(X_t|X_0=x)\to\Ber\pare{\frac{f(x)+1}2}$.
\end{proof}

\section*{Acknowledgments}
The authors thank Elchanan Mossel and Jordan Stoyanov for comments on an earlier version of this note, Adam Jaffe for an engaging discussion about this walk many years ago, and anonymous reviewers for helpful revisions.

\bibliographystyle{acm}
\bibliography{outbib}

\end{document}